\DeclareSymbolFont{bbold}{U}{bbold}{m}{n}
\DeclareSymbolFontAlphabet{\mathbbold}{bbold}
\DeclareMathOperator{\spl}{split}
\DeclareMathOperator{\nsp}{non-split}
\DeclareMathOperator{\aut}{Aut}
\DeclareMathOperator{\tors}{tors}
\DeclareMathOperator{\GL}{GL}
\DeclareMathOperator{\SL}{SL}
\DeclareMathOperator{\PGL}{PGL}
\DeclareMathOperator{\tr}{tr}
\chardef\bslash=`\\ 
\begin{document}


\newtheorem{Theorem}{Theorem}[section]

\newtheorem{cor}[Theorem]{Corollary}

\newtheorem{Conjecture}[Theorem]{Conjecture}

\newtheorem{exercise}[Theorem]{Exercise}
\newtheorem{Question}[Theorem]{Question}
\newtheorem{lemma}[Theorem]{Lemma}
\newtheorem{property}[Theorem]{Property}
\newtheorem{proposition}[Theorem]{Proposition}
\newtheorem{ax}[Theorem]{Axiom}
\newtheorem{claim}[Theorem]{Claim}

\newtheorem{nTheorem}{Surjectivity Theorem}

\theoremstyle{definition}
\newtheorem{Definition}[Theorem]{Definition}
\newtheorem{problem}[Theorem]{Problem}
\newtheorem{question}[Theorem]{Question}
\newtheorem{Example}[Theorem]{Example}

\newtheorem{remark}[Theorem]{Remark}
\newtheorem{diagram}{Diagram}
\newtheorem{Remark}[Theorem]{Remark}
\newcommand{\diagref}[1]{diagram~\ref{#1}}
\newcommand{\thmref}[1]{Theorem~\ref{#1}}
\newcommand{\secref}[1]{Section~\ref{#1}}
\newcommand{\subsecref}[1]{Subsection~\ref{#1}}
\newcommand{\lemref}[1]{Lemma~\ref{#1}}
\newcommand{\corref}[1]{Corollary~\ref{#1}}
\newcommand{\exampref}[1]{Example~\ref{#1}}
\newcommand{\remarkref}[1]{Remark~\ref{#1}}
\newcommand{\corlref}[1]{Corollary~\ref{#1}}
\newcommand{\claimref}[1]{Claim~\ref{#1}}
\newcommand{\defnref}[1]{Definition~\ref{#1}}
\newcommand{\propref}[1]{Proposition~\ref{#1}}
\newcommand{\prref}[1]{Property~\ref{#1}}
\newcommand{\itemref}[1]{(\ref{#1})}
\newcommand{\ul}[1]{\underline{#1}}


\newcommand{\CE}{\mathcal{E}}
\newcommand{\CG}{\mathcal{G}}\newcommand{\CV}{\mathcal{V}}
\newcommand{\CL}{\mathcal{L}}
\newcommand{\CM}{\mathcal{M}}
\newcommand{\A}{\mathcal{A}}
\newcommand{\CO}{\mathcal{O}}
\newcommand{\B}{\mathcal{B}}
\newcommand{\CS}{\mathcal{S}}
\newcommand{\CX}{\mathcal{X}}
\newcommand{\CY}{\mathcal{Y}}
\newcommand{\CT}{\mathcal{T}}
\newcommand{\CW}{\mathcal{W}}
\newcommand{\CJ}{\mathcal{J}}

\newcommand{\st}{\sigma}
\renewcommand{\k}{\varkappa}
\newcommand{\Frac}{\mbox{Frac}}
\newcommand{\XC}{\mathcal{X}}
\newcommand{\wt}{\widetilde}
\newcommand{\wh}{\widehat}
\newcommand{\mk}{\medskip}
\renewcommand{\sectionmark}[1]{}
\renewcommand{\Im}{\operatorname{Im}}
\renewcommand{\Re}{\operatorname{Re}}
\newcommand{\la}{\langle}
\newcommand{\ra}{\rangle}
\newcommand{\LND}{\mbox{LND}}
\newcommand{\Pic}{\mbox{Pic}}
\newcommand{\lnd}{\mbox{lnd}}
\newcommand{\GLND}{\mbox{GLND}}\newcommand{\glnd}{\mbox{glnd}}
\newcommand{\Der}{\mbox{DER}}\newcommand{\DER}{\mbox{DER}}
\renewcommand{\th}{\theta}
\newcommand{\ve}{\varepsilon}
\newcommand{\1}{^{-1}}
\newcommand{\iy}{\infty}
\newcommand{\iintl}{\iint\limits}
\newcommand{\capl}{\operatornamewithlimits{\bigcap}\limits}
\newcommand{\cupl}{\operatornamewithlimits{\bigcup}\limits}
\newcommand{\suml}{\sum\limits}
\newcommand{\ord}{\operatorname{ord}}
\newcommand{\gal}{\operatorname{Gal}}
\newcommand{\bk}{\bigskip}
\newcommand{\fc}{\frac}
\newcommand{\g}{\gamma}
\newcommand{\be}{\beta}
\newcommand{\dl}{\delta}
\newcommand{\Dl}{\Delta}
\newcommand{\lm}{\lambda}
\newcommand{\Lm}{\Lambda}
\newcommand{\om}{\omega}
\newcommand{\ov}{\overline}
\newcommand{\vp}{\varphi}
\newcommand{\kap}{\varkappa}

\newcommand{\Vp}{\Phi}
\newcommand{\Varphi}{\Phi}
\newcommand{\BC}{\mathbb{C}}
\newcommand{\C}{\mathbb{C}}\newcommand{\BP}{\mathbb{P}}
\newcommand{\BQ}{\mathbb {Q}}
\newcommand{\BM}{\mathbb{M}}
\newcommand{\BR}{\mathbb{R}}\newcommand{\BN}{\mathbb{N}}
\newcommand{\BZ}{\mathbb{Z}}\newcommand{\BF}{\mathbb{F}}
\newcommand{\BA}{\mathbb {A}}
\renewcommand{\Im}{\operatorname{Im}}
\newcommand{\idd}{\operatorname{id}}
\newcommand{\ep}{\epsilon}
\newcommand{\tp}{\tilde\partial}
\newcommand{\doe}{\overset{\text{def}}{=}}
\newcommand{\supp} {\operatorname{supp}}
\newcommand{\loc} {\operatorname{loc}}
\newcommand{\de}{\partial}
\newcommand{\z}{\zeta}
\renewcommand{\a}{\alpha}
\newcommand{\G}{\Gamma}
\newcommand{\der}{\mbox{DER}}

\newcommand{\Spec}{\operatorname{Spec}}
\newcommand{\Sym}{\operatorname{Sym}}
\newcommand{\Aut}{\operatorname{Aut}}

\newcommand{\Idd}{\operatorname{Id}}

\newcommand{\tG}{\widetilde G}

\newcommand{\FX}{\mathfrac {X}}
\newcommand{\FV}{\mathfrac {V}}
\newcommand{\SX}{\mathcal {X}}
\newcommand{\SV}{\mathcal {V}}
\newcommand{\SO}{\mathcal {O}}
\newcommand{\SD}{\mathcal {D}}
\newcommand{\Sr}{\rho}
\newcommand{\SR}{\mathcal {R}}
\newcommand{\cl}{\mathcal{C}}
\newcommand{\ok}{\mathcal{O}_K}
\newcommand{\ab}{\mathcal{AB}}

\setcounter{equation}{0} \setcounter{section}{0}

\newcommand{\ds}{\displaystyle}
\newcommand{\gl}{\lambda}
\newcommand{\gL}{\Lambda}
\newcommand{\gge}{\epsilon}
\newcommand{\gG}{\Gamma}
\newcommand{\ga}{\alpha}
\newcommand{\gb}{\beta}
\newcommand{\gd}{\delta}
\newcommand{\gD}{\Delta}
\newcommand{\gs}{\sigma}
\newcommand{\mbq}{\mathbb{Q}}
\newcommand{\mbr}{\mathbb{R}}
\newcommand{\mbz}{\mathbb{Z}}
\newcommand{\mbc}{\mathbb{C}}
\newcommand{\mbn}{\mathbb{N}}
\newcommand{\mbp}{\mathbb{P}}
\newcommand{\mbf}{\mathbb{F}}
\newcommand{\mbe}{\mathbb{E}}
\newcommand{\lcm}{\text{lcm}\,}
\newcommand{\mf}[1]{\mathfrak{#1}}
\newcommand{\ol}[1]{\overline{#1}}
\newcommand{\mc}[1]{\mathcal{#1}}
\newcommand{\nequiv}{\equiv\hspace{-.07in}/\;}
\newcommand{\bnequiv}{\equiv\hspace{-.13in}/\;}

\title{Elliptic curves with $2$-torsion contained in the $3$-torsion field}
\author{J. Brau and N. Jones}


\date{}

\begin{abstract}
There is a modular curve $X'(6)$ of level $6$ defined over $\mbq$ whose $\mbq$-rational points correspond to $j$-invariants of elliptic curves $E$ over $\mbq$ that satisfy $\mbq(E[2]) \subseteq \mbq(E[3])$.  
In this note we characterize the $j$-invariants of elliptic curves with this property by exhibiting an explicit model of $X'(6)$.  Our motivation is two-fold:  on the one hand, $X'(6)$ belongs to the list of modular curves which parametrize non-Serre curves (and is not well-known), and on the other hand, $X'(6)(\mbq)$ gives an infinite family of examples of elliptic curves with non-abelian ``entanglement fields,'' which is relevant to the systematic study of correction factors of various conjectural constants for elliptic curves over $\mbq$.
\end{abstract}

\maketitle

\section{Introduction} \label{introduction}

Let $K$ be a number field, let $E$ be an elliptic curve over $K$, and for any positive integer $n$, let $E[n]$ denote the $n$-torsion of $E$.  For a prime $\ell$, let $\ds E[\ell^\infty] := \bigcup_{m \geq 1} E[\ell^m]$, and furthermore put $\ds E_{\tors} := \bigcup_{n \geq 1} E[n]$.  Fixing a $\hat{\mbz}$-basis of $E_{\tors}$, for any prime $\ell$ there is an induced $\mbz_\ell$-basis of $E[\ell^\infty]$ and for any $n \geq 1$ there is an induced $\mbz/n\mbz$-basis of $E[n]$.  Consider the Galois representations
\begin{equation*} 
\begin{split}
\rho_{E,n} \colon \gal(\ol{K}/K) &\longrightarrow \aut(E[n]) \simeq \GL_2(\mbz/n\mbz) \\
\rho_{E,\ell^\infty} \colon \gal(\ol{K}/K) &\longrightarrow \aut(E[\ell^\infty]) \simeq \GL_2(\mbz_\ell) \\
\rho_E \colon \gal(\ol{K}/K) &\longrightarrow \aut(E_{\tors}) \simeq \GL_2(\hat{\mbz}),
\end{split}
\end{equation*}
each defined by letting $\gal(\ol{K}/K)$ act on the appropriate set of torsion points, viewed relative to the appropriate basis.  

A celebrated theorem of Serre \cite{serre} states that, if $E$ is an elliptic curve over a number field $K$ without complex multiplication (``non-CM''), then the Galois representation $\rho_E$ has an open image with respect to the profinite topology on $\GL_2(\hat{\mbz})$, which is to 
say that
$
[ \GL_2(\hat{\mbz}) : \rho_E( \gal(\ol{K}/K)) ] < \infty.
$
It is of interest to understand the image of $\rho_E$.  To determine $\rho_E( \gal(\ol{K}/K) )$ in practice, one begins by computing the $\ell$-adic image $\rho_{E,\ell^\infty}( \gal(\ol{K}/K))$ for each prime $\ell$.
One then has that
\[
\rho_E(\gal(\ol{K}/K)) \hookrightarrow \prod_{\ell} \rho_{E,\ell^\infty}( \gal(\ol{K}/K)) \subseteq \prod_{\ell} \GL_2(\mbz_\ell) \simeq \GL_2(\hat{\mbz}),
\]
and although the image of $\rho_E(\gal(\ol{K}/K))$ in $\ds \prod_{\ell} \rho_{E,\ell^\infty}( \gal(\ol{K}/K))$ projects onto each $\ell$-adic factor, the inclusion may nevertheless be onto a proper subgroup.  Understanding the image of $\ds \rho_E(\gal(\ol{K}/K)) \hookrightarrow  \prod_{\ell} \rho_{E,\ell^\infty}( \gal(\ol{K}/K))$ now amounts to understanding the \emph{entanglement fields}
\[
K(E[m_1]) \cap K(E[m_2]),
\]
for each pair $m_1, m_2 \in \mbn$ which are relatively prime\footnote{Here and throughout the paper, $K(E[n]) := \ol{K}^{\ker \rho_{E,n}}$ denotes the \emph{$n$-th division field} of $E$.}.  Note that any such intersection is necessarily Galois over $K$.  One of the questions which motivates this note is the following.

\begin{question} \label{entanglementquestion}
Given a number field $K$, can one classify the triples $(E, m_1, m_2)$ with $E$ an elliptic curve over $K$ and $m_1, m_2$ a pair of co-prime integers for which the entanglement field $K(E[m_1]) \cap K(E[m_2])$ is non-abelian over $K$?
\end{question}

This question is closely related to the study of correction factors of various conjectural constants for elliptic curves over $\mathbb{Q}$. In order to illustrate this point, consider the following elliptic curve analogue Artin's conjecture on primitive roots. For an elliptic curve $E$ over $\mathbb{Q}$, determine the density of primes $p$ such that $E$ has good reduction at $p$ and $\tilde{E}(\mathbb{F}_p)$ is a cyclic group, where $\tilde{E}$ denotes the mod $p$ reduction of $E$. Note that the condition of $\tilde{E}(\mathbb{F}_p)$ being cyclic is completely determined by $\rho_E(\gal(\ol{\mathbb{Q}}/\mathbb{Q}))$. Indeed,  $\tilde{E}(\mathbb{F}_p)$ is a cyclic group if and only if $p$ does not split completely in the field extension $\mathbb{Q}(E[\ell])$ for any $\ell \neq p$.

By the Chebotarev density theorem, the set of primes $p$ that do not split completely in $\mathbb{Q}(E[\ell])$ has density equal to 
$$
\delta_\ell=1-\frac{1}{[\mathbb{Q}(E[\ell]):\mathbb{Q}]}.
$$
If we assume that the various splitting conditions at each prime $\ell$ are independent, then it is reasonable to conjecture that the density of primes $p$ for which $\tilde{E}(\mathbb{F}_p)$ is cyclic is equal to $\prod_\ell \delta_\ell$. However, this assumption of independence is not correct, and this lack of independence is explained by the entanglement fields. 

Serre showed in \cite{serre2} that Hooley's method of proving Artin's conjecture on primitive roots can be adapted to prove that the density of primes $p$ for which $\tilde{E}(\mathbb{F}_p)$ is cyclic is given under GRH by the inclusion-exclusion sum
\begin{equation}\label{HooleyDensity}
\delta(E) = \sum_{n=1}^\infty \frac{\mu(n)}{[\mbq(E[n]):\mathbb{Q}]}
\end{equation}
where $\mu$ denotes the M{\"o}bius function. Taking into account entanglements between the various torsion fields implies that
$$
\delta(E) = C_E \prod_\ell \delta_\ell
$$
where $C_E$ is an \emph{entanglement correction factor}, and explicitely evaluating such densities amounts to computing the correction factors $C_E$. When all the entanglements fields of an elliptic curve over $\mathbb{Q}$ are abelian, then the image of $\ds \rho_E(\gal(\ol{K}/K)) \hookrightarrow  \prod_{\ell} \rho_{E,\ell^\infty}( \gal(\ol{K}/K))$ is cut out by characters, and the correction factor can be given as a character sum. This method has the advantage that it is well-suited to deal with many other problems of this nature where the explicit evaluation of (\ref{HooleyDensity}) becomes problematic.  Understanding which non-abelian entanglements can occur is therefore important for the systematic study of such constants.

With respect to entanglement fields, the case $K = \mbq$, although it is usually the first case considered, has a complication which doesn't arise over any other number field.  Indeed, when the base field is $\mbq$, the Kronecker-Weber theorem, together with the containment $\mbq(\zeta_n) \subseteq \mbq(E[n])$, \emph{forces} the occurrence of non-trivial entanglement fields\footnote{Here and throughout the paper, $\zeta_n$ denotes a primitive $n$-th root of unity.}.  It was observed by Serre \cite[Proposition 22]{serre} that for any elliptic curve $E$ over $\mbq$ one has
\begin{equation} \label{serreentanglement}
\mbq(\sqrt{\gD_E}) \subseteq \mbq(E[2]) \cap \mbq(\zeta_n),
\end{equation}
where $n = 4| \gD_E |$.  This containment forces $\rho_{E}(\gal(\ol{\mbq}/\mbq))$ to lie in an appropriate index two subgroup of $\GL_2(\hat{\mbz})$, so that one must have
\begin{equation} \label{serrecurvecriterion}
[ \GL_2(\hat{\mbz}) : \rho_E(\gal(\ol{\mbq}/\mbq)) ] \geq 2.
\end{equation}
Several examples are known of elliptic curves $E$ over $\mbq$ for which the entanglement \eqref{serreentanglement} is the only obstruction to surjectivity of $\rho_E$, i.e. for which equality holds in \eqref{serrecurvecriterion}.
\begin{Definition}
We call an elliptic curve $E$ defined over $\mbq$ a \textbf{Serre curve} if $[ \GL_2(\hat{\mbz}) : \rho_E(\gal(\ol{\mbq}/\mbq)) ] = 2$.
\end{Definition}
In \cite{jones} it is shown using sieve methods that, when taken by height, almost all elliptic curves $E$ over $\mbq$ are Serre curves (see also \cite{zywina}, which generalizes this to the case $K \neq \mbq$, and \cite{radhakrishnan}, which sharpens the upper bound to an asymptotic formula).  In \cite{cogrjo}, different ideas are used to deduce stronger upper bounds for the number of elliptic curves in \emph{one-parameter} families which are not Serre curves.  These results are obtained by viewing non-Serre curves as coming from rational points on modular curves.  More precisely, there is a family
$ 
\mc{X} = \{ X_1, X_2, \dots \}
$
of modular curves with the property that, for each elliptic curve $E$, one has
\begin{equation} \label{nonserrecurvesaspoints}
\text{$E$ is not a Serre curve } \; \Longleftrightarrow \; j(E) \in \bigcup_{X \in \mc{X}} j(X(\mbq)),
\end{equation}
where $j$ denotes the natural projection followed by the usual $j$-map:
\begin{equation} \label{jmap}
j : X \longrightarrow X(1) \longrightarrow \mbp^1.
\end{equation}
In \cite{cogrjo}, the authors use \eqref{nonserrecurvesaspoints} together with geometric methods to bound the number of non-Serre curves in a given one-parameter family.  This brings us to the following question, which serves as additional motivation for the present note.
\begin{question} \label{nonserrecurvesquestion}
Consider the family $\mc{X}$ occurring in \eqref{nonserrecurvesaspoints}.  What is an explicit list of the modular curves in $\mc{X}$?
\end{question}
The modular curves in $\mc{X}$ of prime level $\ell$ correspond to maximal proper subgroups of $\GL_2(\mbz/\ell\mbz)$ and have been studied extensively.  Let
\begin{equation} \label{exceptionalsetatp}
\mc{E}_\ell \subseteq \left\{ X_0(\ell), X_{\spl}^+(\ell), X_{\nsp}^+(\ell), X_{A_4}(\ell), X_{S_4}(\ell), X_{A_5}(\ell)\right\} 
\end{equation}
be the set of modular curves whose rational points correspond to $j$-invariants of elliptic curves $E$ for which $\rho_{E,\ell}$ is not surjective (each of the modular curves $ X_{A_4}(\ell)$, $X_{S_4}(\ell)$, and $X_{A_5}(\ell)$  corresponding to the exceptional groups $A_4$, $S_4$ and $A_5$ only occurs for certain primes $\ell$).  One has
\[
\bigcup_{\ell \text{ prime}} \mc{E}_\ell \subseteq \mc{X}.
\]
The family $\mc{X}$ must also contain two other modular curves $X'(4)$ and $X''(4)$ of level $4$, and another $X'(9)$ of level $9$, which have been considered in \cite{dokchitser} and \cite{elkies}, respectively.

In this note, we consider a modular curve $X'(6)$ of level $6$ which, taken together with those listed above, completes the set $\mc{X}$ of modular curves occurring in \eqref{nonserrecurvesaspoints}, answering Question \ref{nonserrecurvesquestion}.  First, we recall the general construction of modular curves associated to subgroups $H \subseteq \GL_2(\mbz/n\mbz)$ (for more details, see \cite{delignerapoport}).  Let $X(n)$ denote the complete modular curve of level $n$, which parametrizes elliptic curves together with 
chosen $\mbz/n\mbz$-bases of $E[n]$.  Let $H \subseteq \GL_2(\mbz/n\mbz)$ be a subgroup containing $-I$ for which the determinant map
\begin{equation*} 
\det \colon H \longrightarrow (\mbz/n\mbz)^\times
\end{equation*}
is surjective, and consider the quotient curve  $X_H := X(n)/H$ together with the $j$-invariant
\[
j \colon X_H \longrightarrow \mbp^1.
\]
For any $x \in \mbp^1(\mbq)$, we have that
\begin{equation} \label{modularinterpretation}
x \in j(X_H(\mbq)) \; \Longleftrightarrow \; \begin{matrix}
\exists \text{ an elliptic curve $E$ over $\mbq$ and $\exists g \in \GL_2(\mbz/n\mbz)$} \\
\text{with $j(E) = x$  and $\rho_{E,n}(\gal(\ol{\mbq}/\mbq)) \subseteq g^{-1} H g$. }
\end{matrix}
\end{equation}
Thus, to describe $X'(6)$, it suffices to describe the corresponding subgroup $H \subseteq \GL_2(\mbz/6\mbz)$.

There is exactly one index $6$ normal subgroup $\mc{N} \subseteq \GL_2(\mbz/3\mbz)$, defined by
\begin{equation} \label{defofN}
\mc{N} := \left\{ 
\begin{pmatrix}
x & -y \\
y & x
\end{pmatrix} : \; x^2 + y^2 \equiv 1 \mod 3
\right\} \sqcup 
\left\{  
\begin{pmatrix}
x & y \\
y & -x
\end{pmatrix} : \; x^2 + y^2 \equiv -1 \mod 3
\right\}.
\end{equation}
This subgroup fits into an exact sequence
\begin{equation} \label{exactsequenceforN}
1 \longrightarrow \mc{N} \longrightarrow \GL_2(\mbz/3\mbz) \longrightarrow \GL_2(\mbz/2\mbz) \longrightarrow 1,
\end{equation}
and we denote by 
\begin{equation} \label{defofpsi}
\theta \colon \GL_2(\mbz/3\mbz) \longrightarrow \GL_2(\mbz/2\mbz)
\end{equation}
the surjective map in the above sequence.  We take $H \subseteq \GL_2(\mbz/2\mbz) \times \GL_2(\mbz/3\mbz)$ to be the graph of $\theta$, viewed as a subgroup of $\GL_2(\mbz/6\mbz)$ via the Chinese Remainder Theorem.  The modular curve $X'(6)$ is then defined by 
\begin{equation} \label{defofH}
X'(6) := X_{H_6'}, \; \text{ where } \; H_6' := \{ (g_2,g_3) \in \GL_2(\mbz/2\mbz) \times \GL_2(\mbz/3\mbz) : \; g_2 = \theta(g_3) \} \subseteq \GL_2(\mbz/6\mbz).
\end{equation}
Unravelling \eqref{modularinterpretation} in this case, we find that, for every elliptic curve $E$ over $\mbq$,
\begin{equation} \label{characterizationofXprimeof6}
j(E) \in j(X'(6)(\mbq)) \; \Longleftrightarrow \; E \simeq_{\ol{\mbq}} E' \; \text{ for some } E' \text{ over } \mbq \text{ for which } \mbq(E'[2]) \subseteq \mbq(E'[3]).
\end{equation}
By considering the geometry of the natural map $X'(6) \longrightarrow X(1)$, the curve $X'(6)$ is seen to have genus zero and one cusp.  Since $\gal(\ol{\mbq}/\mbq)$ acts on the cusps, the single cusp must be defined over $\mbq$, thus endowing $X'(6)$ with a rational point.  Therefore $X'(6) \simeq_\mbq \mbp^1$.   We prove the following theorem, which gives an explicit model of $X'(6)$.
\begin{Theorem} \label{uniformizertheorem}  There exists a uniformizer
$
\ds t \colon X'(6) \longrightarrow \mbp^1
$
with the property that
\[
j = 2^{10} 3^3 t^3 (1 - 4t^3),
\]
where $j \colon X'(6) \longrightarrow X(1) \simeq \mbp^1$ is the usual $j$-map.  
\end{Theorem}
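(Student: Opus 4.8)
The plan is to determine $j$ as an explicit degree-$6$ rational map $\mbp^1 \to \mbp^1$, using the fact (established above) that $X'(6) \simeq_\mbq \mbp^1$. Since $-I \in H_6'$ and $\det$ is surjective on $H_6'$, the $j$-map has degree $[\GL_2(\mbz/6\mbz) : H_6'] = 288/48 = 6$. There is a single cusp, which is rational, so $j$ is totally ramified over $\infty$; choosing a coordinate $t$ with the cusp at $t = \infty$ makes $j$ a polynomial of degree $6$ in $t$. It then remains to locate the zeros of $j$ and of $j - 1728$, i.e. to compute the ramification of $j$ over $0$ and $1728$, and finally to pin down the constants.

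First I would compute the ramification over $j = 0$ and $j = 1728$ by the standard fixed-point counts for the action of the order-$3$ and order-$2$ elliptic stabilizers on the coset space $H_6' \backslash \GL_2(\mbz/6\mbz)$; this yields cycle types $(3,1,1,1)$ over $j = 0$ and $(2,2,2)$ over $j = 1728$, consistent with genus $0$ via Riemann--Hurwitz, since $\sum_P (e_P - 1) = 5 + 2 + 3 = 10$ gives $2g - 2 = 6(-2) + 10 = -2$. Organizing this is cleanest through the intermediate curves. Since $\theta$ is surjective with kernel $\mc{N}$ and the only normal subgroups of $\GL_2(\mbz/2\mbz) \simeq S_3$ are $1, A_3, S_3$, there is exactly one curve $X_K$ strictly between $X'(6)$ and $X(1)$, of degree $2$ over $X(1)$. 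A short computation with \eqref{defofN} shows $\mathrm{sgn} \circ \theta = \det \bmod 3$ (equivalently $\theta^{-1}(A_3) = \SL_2(\mbz/3\mbz)$, with $\mc{N} \simeq Q_8$), so $X_K$ is precisely the modular curve of the entanglement $\mbq(\sqrt{\Delta_E}) \subseteq \mbq(\zeta_3)$.

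This identification pins the degree-$2$ map exactly. A double cover of $\mbp^1_j$ branched over $\{1728, \infty\}$ has the form $j = \alpha u^2 + \beta u$ in a coordinate $u$ with the cusp at $u = \infty$ and one preimage of $j=0$ placed at $u = 0$; forcing the finite critical value $-\beta^2/4\alpha$ to equal $1728$ gives $\beta^2/\alpha = -2^8 3^3$, and rescaling $u$ (which scales $\beta$ linearly) normalizes $\beta = 2^{10}3^3$, $\alpha = -2^{12}3^3$, i.e. $j = 2^{10}3^3\, u(1-4u)$. Crucially, the twist by $-3$ is exactly what makes \emph{both} preimages of $j=0$ rational (since $-1728 \cdot (-3) = 72^2 \in (\mbq^\times)^2$), so that $u = 0$ may be chosen there over $\mbq$; this is where the computation $\mathrm{sgn}\circ\theta = \det$ does its work. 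Composing, the residual map $X'(6) \to X_K$ is a cyclic cubic cover branched exactly over $u = 0$ (beneath the order-$3$ point $t = 0$) and over the cusp, so that $u = c\,t^3$ for some $c \in \mbq^\times$, whence $j = 2^{10}3^3\, c\,t^3(1 - 4c\,t^3)$.

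The main obstacle is thus the final step: showing that this cyclic cubic cover is \emph{untwisted}, i.e. that $c \in (\mbq^\times)^3$, so that after replacing $t$ by $c^{1/3}t$ one obtains $j = 2^{10}3^3\, t^3(1 - 4t^3)$. The branch data alone determine the cover only up to a cubic twist, so genuine modular or arithmetic input is required here. I would resolve it by an explicit computation: either construct $t$ as an explicit modular function on $X'(6)$ from the level-$6$ structure (exploiting the $\mc{N} \simeq Q_8$ description at level $3$ together with the mod-$2$ data) and read off $c$ from the leading terms of its $q$-expansion --- the identity $j = 2^{10}3^3 t^3(1-4t^3)$ then following by matching finitely many $q$-coefficients, since both sides are functions on $X'(6)$ with a pole only at the single cusp, of order at most $6$ --- or, equivalently, determine the cube class $c$ by computing the field of definition of the fibre of $j$ over $0$ (a CM computation at $j = 0$), or by exhibiting a single elliptic curve $E/\mbq$ with $\mbq(E[2]) \subseteq \mbq(E[3])$ whose coordinate $u(E)$ is a nonzero cube, which forces $c \in (\mbq^\times)^3$.
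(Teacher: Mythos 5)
Your skeleton is correct and genuinely different from the paper's argument. You work top-down from the covering $X'(6) \to X(1)$: the degree-$6$ computation, the ramification types $(3,1,1,1)$ over $j=0$ and $(2,2,2)$ over $j=1728$, the unique intermediate curve $X_K$ of degree $2$ (correctly identified, via $\sgn \circ \theta = \det$ and $\theta^{-1}(A_3) = \SL_2(\mbz/3\mbz)$, with the classical entanglement $\mbq(\sqrt{\gD_E}) \subseteq \mbq(\zeta_3)$, i.e.\ the conic $j - 1728 = -3s^2$, whence $j = 2^{10}3^3u(1-4u)$), and the residual cyclic cubic cover $u = c\,t^3$ branched over $u=0,\infty$. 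All of this checks out and explains structurally why the answer has the shape it does. The paper argues bottom-up instead: it writes down the explicit family $\mbe_t \colon y^2 = x^3 + 3t(1-4t^3)x + (1-4t^3)(\frac{1}{2}-4t^3)$, which has $j(\mbe_t) = 2^{10}3^3t^3(1-4t^3)$ and $\gD_{\mbe_t} = -2^63^3(1-4t^3)^2$, verifies $\mbq(\mbe_t[2]) \subseteq \mbq(\zeta_3, \gD_{\mbe_t}^{1/3}) \subseteq \mbq(\mbe_t[3])$ by exhibiting the roots of the $2$-division polynomial as explicit expressions in $\zeta_3$ and $\gD_{\mbe_t}^{1/3}$, and concludes by comparing degrees.

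The genuine gap is exactly where you say it is: the class of $c$ in $\mbq^\times/(\mbq^\times)^3$. Everything you prove determines $j$ only in the form $2^{10}3^3\,c\,t^3(1-4ct^3)$, and a non-cube $c$ gives a $\mbq$-inequivalent model with a different set of rational $j$-values, so the theorem is simply not established until $c$ is computed --- and you list three methods without carrying out any of them. Each is feasible but each costs real work: the $q$-expansion route requires actually constructing a level-$6$ modular function; the fibre-over-$j=0$ route needs care because $j=0$ is a CM point with extra automorphisms; and the ``single curve'' route requires producing an $E/\mbq$, verifying $\mbq(E[2]) \subseteq \mbq(E[3])$ by hand, and identifying which of the two rational points of $X_K$ above $j(E)$ the curve actually sits over. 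Note that the paper's explicit family is essentially your third method executed in parametric form, and it does double duty: it both fixes the cubic twist and supplies the infinitely many rational points needed to compare the two degree-$6$ covers. So: an illuminating and essentially viable alternative route, but the one arithmetically substantive step has been deferred rather than done.
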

\begin{remark}
By \eqref{characterizationofXprimeof6}, Theorem \ref{uniformizertheorem} is equivalent to the following statement:  for any elliptic curve $E$ over $\mbq$, $E$ is isomorphic over $\ol{\mbq}$ to an elliptic curve $E'$ satisfying
\[
\mbq(E'[2]) \subseteq \mbq(E'[3])
\]
if and only if
$
j(E) = 2^{10} 3^3 t^3 (1 - 4t^3)
$
for some $t \in \mbq$.
\end{remark}

Furthermore, we prove the following theorem, which answers Question \ref{nonserrecurvesquestion}.  For each prime $\ell$, consider the set $\mc{G}_{\ell,\text{max}}$ of maximal proper subgroups of $\GL_2(\mbz/\ell\mbz)$, which surject via determinant onto $(\mbz/\ell\mbz)^\times$:
\[
\mc{G}_{\ell,\text{max}} := \{ H \subsetneq \GL_2(\mbz/\ell\mbz) : \; \det(H) = (\mbz/\ell\mbz)^\times \text{ and } \nexists H_1 \text{ with } H \subsetneq H_1 \subsetneq \GL_2(\mbz/\ell\mbz) \}.
\]
The group $\GL_2(\mbz/\ell\mbz)$ acts on $\mc{G}_{\ell,\text{max}}$ by conjugation, and let $\mc{R}_\ell$ be a set of representatives of $\mc{G}_{\ell,\text{max}}$ modulo this action.  By \eqref{modularinterpretation}, the collection $\mc{X}$ occurring in \eqref{nonserrecurvesaspoints} must contain as a subset
\begin{equation} \label{Ep}
\mc{E}_\ell := \{ X_H : \; H \in \mc{R}_\ell \},
\end{equation}
the set of modular curves attached to subgroups $H \in \mc{R}_\ell$ (this gives a more precise description of the set $\mc{E}_\ell$ in \eqref{exceptionalsetatp}).  Furthermore, the previously mentioned modular curves $X'(4)$, $X''(4)$, and $X'(9)$ correspond to the following subgroups.  Let $\ve : \GL_2(\mbz/2\mbz) \longrightarrow \{ \pm 1 \}$ denote the unique non-trivial character, and we will view $\det \colon \GL_2(\mbz/4\mbz) \longrightarrow (\mbz/4\mbz)^\times \simeq \{ \pm 1 \}$ as taking the values $\pm 1$.
\begin{equation} \label{X4andX9}
\begin{split}
X'(4) = X_{H_4'}, \; \text{ where } \; &H_4' := \{ g \in \GL_2(\mbz/4\mbz) : \; \det g = \ve(g \mod 2) \} \subseteq \GL_2(\mbz/4\mbz), \\
X''(4) = X_{H_4''} \; \text{ where } \; &H_4'' := 
\left\langle
\begin{pmatrix}
0 & 1 \\
3 & 0
\end{pmatrix}, 
\begin{pmatrix}
0 & 1 \\
1 & 1
\end{pmatrix}
\right\rangle \subseteq \GL_2(\mbz/4\mbz) \\
X'(9) = X_{H_9'} \; \text{ where } \; &H_9' := 
\left\langle
\begin{pmatrix}
0 & 2 \\
4 & 0
\end{pmatrix}, 
\begin{pmatrix}
4 & 1 \\
-3 & 4
\end{pmatrix},
\begin{pmatrix}
2 & 0 \\
0 & 2
\end{pmatrix},
\begin{pmatrix}
-1 & 0 \\
0 & 1
\end{pmatrix}
\right\rangle \subseteq \GL_2(\mbz/9\mbz).
\end{split}
\end{equation}
For more details on these modular curves, see \cite{dokchitser} and  \cite{elkies}.
\begin{Theorem} \label{completelisttheorem}
Let $\mc{X}$ be defined by
\[
\mc{X} = \left\{ X'(4), X''(4), X'(9), X'(6) \right\} \cup \bigcup_{\ell \text{ prime}} \mc{E}_\ell,
\]
where $X'(4)$, $X''(4)$ and $X'(9)$ are defined by \eqref{X4andX9}, $X'(6)$ is defined by \eqref{defofH}, and $\mc{E}_\ell$ is as in \eqref{Ep}.  Then, for any elliptic curve $E$ over $\mbq$,
\[
\text{$E$ is not a Serre curve } \; \Longleftrightarrow \; j(E) \in \bigcup_{X \in \mc{X}} j(X(\mbq)).
\]
\end{Theorem}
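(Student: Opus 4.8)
The plan is to work entirely on the group-theoretic side, translating ``$E$ is not a Serre curve'' into a statement about the image $G := \rho_E(\gal(\ol{\mbq}/\mbq)) \subseteq \GL_2(\hat{\mbz})$, its projections $G_\ell$ to $\GL_2(\mbz_\ell)$, and its reductions $G(\ell^k)$ to $\GL_2(\mbz/\ell^k\mbz)$. By \eqref{serreentanglement} one always has $G \subseteq H_E$, where $H_E$ is the index-two subgroup of $\GL_2(\hat{\mbz})$ cut out by matching the sign character $\ve$ on the mod-$2$ factor with the quadratic character attached to $\gD_E$ on the cyclotomic part, and $E$ is a Serre curve precisely when $G = H_E$. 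Following the structural description of Serre curves (see \cite{serre}, \cite{jones}), $G = H_E$ holds if and only if (i) $G_\ell = \GL_2(\mbz_\ell)$ for every prime $\ell$, and (ii) the image of $G$ in $\prod_\ell G_\ell$ carries no entanglement beyond the abelian one forced by $\sqrt{\gD_E}$. Thus $E$ fails to be a Serre curve exactly when (i) fails at some $\ell$, or (ii) fails; the proof will assign to each failure a rational point on one of the curves in $\mc{X}$, and conversely.

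For the easy direction ($\Leftarrow$), I would check that each member of $\mc{X}$ genuinely parametrizes non-Serre curves by an index computation. If $j(E) \in j(X_H(\mbq))$ for some $H \in \mc{R}_\ell$, then by \eqref{modularinterpretation} $G(\ell)$ lies in a conjugate of the proper subgroup $H \subsetneq \GL_2(\mbf_\ell)$, so $\rho_{E,\ell}$ is not surjective and $E$ cannot be a Serre curve. For $X'(6)$ one computes $[\GL_2(\mbz/6\mbz) : H_6'] = 288/48 = 6 > 2$ directly from \eqref{defofH}, so containment of $G(6)$ in a conjugate of $H_6'$ forces $[\GL_2(\hat{\mbz}):G] \geq 6$; the analogous index computations for $H_4'$, $H_4''$ and $H_9'$ from \eqref{X4andX9} (carried out in \cite{dokchitser} and \cite{elkies}) likewise give index exceeding two.

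The substance is the forward direction ($\Rightarrow$), which I would organize by the dichotomy above. Suppose first that (i) fails, so $G_\ell \neq \GL_2(\mbz_\ell)$ for some $\ell$. If already $G(\ell) \neq \GL_2(\mbf_\ell)$, then since $\det \circ \rho_{E,\ell}$ is the surjective mod-$\ell$ cyclotomic character, $G(\ell)$ sits in a maximal proper subgroup with full determinant; by Dickson's classification this subgroup is conjugate to a member of $\mc{R}_\ell$, placing $j(E)$ in $j(X(\mbq))$ for some $X \in \mc{E}_\ell$. If instead $G(\ell) = \GL_2(\mbf_\ell)$ but $G_\ell \neq \GL_2(\mbz_\ell)$, then Serre's vertical surjectivity result forces $\ell \in \{2,3\}$, and I would invoke the explicit determination of the maximal proper subgroups of $\GL_2(\mbz/4\mbz)$ (resp.\ $\GL_2(\mbz/9\mbz)$) that surject onto $\GL_2(\mbf_2)$ (resp.\ $\GL_2(\mbf_3)$) and have full determinant: up to conjugacy these are $H_4', H_4''$ (resp.\ $H_9'$) of \eqref{X4andX9}, so $j(E)$ lands on $X'(4)$, $X''(4)$ or $X'(9)$. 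This step relies on the subgroup analyses of \cite{dokchitser} and \cite{elkies}.

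Finally suppose (i) holds but (ii) fails: every $G_\ell$ is full, yet $G \subsetneq H_E$, so there is a nontrivial entanglement beyond the discriminant one. Here I would apply Goursat's lemma to the inclusion $G \hookrightarrow \prod_\ell \GL_2(\mbz_\ell)$ to see that any such entanglement corresponds to a common nontrivial quotient of $\GL_2(\mbz_{\ell_1})$ and $\GL_2(\mbz_{\ell_2})$ for distinct primes. The abelian (cyclotomic) quotients produce only the $\sqrt{\gD_E}$-relation already absorbed into $H_E$, so a fresh entanglement must come from a common \emph{non-abelian} quotient. Because the groups $\mathrm{PSL}_2(\mbf_\ell)$ for distinct primes $\ell \geq 5$ are pairwise non-isomorphic simple groups that do not arise as quotients of $\GL_2(\mbz_2)$ or $\GL_2(\mbz_3)$, the only surviving possibility is the exceptional coincidence realizing $\GL_2(\mbf_2) \cong S_3$ as the quotient $\theta \colon \GL_2(\mbf_3) \twoheadrightarrow \GL_2(\mbf_2)$ of \eqref{defofpsi}; matching this with the genuine $\GL_2(\mbf_2)$ factor shows $G(6)$ lies in a conjugate of the graph $H_6'$, so $j(E) \in j(X'(6)(\mbq))$. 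The main obstacle is precisely this last enumeration: one must verify that $S_3$ is the unique nontrivial non-abelian common quotient available among the $\GL_2(\mbz_\ell)$ compatible with full $\ell$-adic images, ruling out in particular any further $2$--$3$ entanglement at levels $4$ or $9$, so that $X'(6)$ really does account for every non-discriminant entanglement. Assembling the three cases exhausts the ways (i) or (ii) can fail and completes the equivalence.
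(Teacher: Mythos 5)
Your high-level architecture agrees with the paper's: send mod-$\ell$ failures to $\mc{E}_\ell$, send $\ell$-adic failures at $\ell\in\{2,3\}$ to $X'(4),X''(4),X'(9)$ via \cite{dokchitser} and \cite{elkies}, and analyze the residual $2$--$3$ entanglement with Goursat's lemma; the backward direction and the first two cases of the forward direction are fine. The genuine gap is exactly the step you label ``the main obstacle,'' and it cannot be dispatched by asserting that $S_3$ is the only common non-abelian quotient. Two things are missing. First, a reduction to a \emph{finite} level: applying Goursat to $G\hookrightarrow\prod_\ell\GL_2(\mbz_\ell)$ produces a common quotient of profinite groups with no a priori bound on its level, so nothing yet confines the entanglement to levels $4$ and $9$ (let alone $2$ and $3$). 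The paper obtains this from the criterion of \cite{jones2}: when all mod-$\ell$ images are full, non-Serre-ness is equivalent to $[\rho_{E,36}(\gal(\ol{\mbq}/\mbq)),\rho_{E,36}(\gal(\ol{\mbq}/\mbq))]\subsetneq[\GL_2(\mbz/36\mbz),\GL_2(\mbz/36\mbz)]$, which is also what justifies discarding abelian quotients (via Lemma \ref{dissolvingundercommutatorslemma}). Your proposal has no substitute for this.

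Second, the Goursat quotient $Q$ for $H=\GL_2(\mbz/4\mbz)\times_\psi\GL_2(\mbz/9\mbz)$ need not be $S_3$ nor simple, and identifying the possibilities is the bulk of the paper's proof: one shows $N_4=\ker\psi_4$ is a $\GL_2(\mbz/2\mbz)$-invariant subspace of $\ker\pi_{4,2}\simeq M_{2\times 2}(\mbz/2\mbz)$, giving the candidate quotients $\GL_2(\mbz/2\mbz)$, $\GL_2(\mbz/2\mbz)\times\{\pm1\}$, $\GL_2(\mbz/2\mbz)\ltimes(\mbz/2\mbz)^2$ (twice), $\PGL_2(\mbz/4\mbz)$, $\GL_2(\mbz/4\mbz)$, and each must be checked either not to be a quotient of $\GL_2(\mbz/9\mbz)$ (e.g.\ because it would force a $\mbz/2\mbz\times\mbz/2\mbz$ quotient of $\GL_2(\mbz/3\mbz)$, impossible since $[\GL_2(\mbz/3\mbz),\GL_2(\mbz/3\mbz)]=\SL_2(\mbz/3\mbz)$) or to collapse onto the $H_6'$ entanglement. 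Even in the surviving case $Q\simeq\GL_2(\mbz/2\mbz)$ you must still rule out that the surjection $\GL_2(\mbz/9\mbz)\twoheadrightarrow Q$ fails to factor through $\GL_2(\mbz/3\mbz)$; the paper does this by computing that the abelianization of $\pi_{9,3}^{-1}(\SL_2(\mbz/3\mbz))$ is $\mbz/3\mbz\times\mbz/3\mbz$ and that normality of $N_9$ in $\GL_2(\mbz/9\mbz)$ would then force $N_9=\SL_2(\mbz/9\mbz)$, a contradiction. Finally, the ``up to conjugacy'' conclusion needs the uniqueness of the normal subgroup $\mc{N}\subseteq\GL_2(\mbz/3\mbz)$ and the fact that every automorphism of $\GL_2(\mbz/2\mbz)$ is inner. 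As written, your last paragraph asserts the conclusion of Proposition \ref{finalprop} rather than proving it.
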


\section{Proofs}

We now prove Theorems \ref{uniformizertheorem} and \ref{completelisttheorem}.  

\noindent \emph{Proof of Theorem \ref{uniformizertheorem}.}
Consider the elliptic curve $\mbe$ over $\mbq(t)$ given by
\begin{equation} \label{defofmbe}
\mbe : \; y^2 = x^3 + 3t \left(1-4t^3 \right) x + \left(1 - 4t^3 \right) \left( \frac{1}{2} - 4t^3 \right),
\end{equation}
with discriminant and $j$-invariant $\gD_\mbe, j(\mbe) \in \mbq(t)$ given, respectively, by
\begin{equation} \label{gDandj}
\gD_{\mbe} = - 2^6 3^3 (1 - 4t^3)^2 \quad \text{ and } \quad j(\mbe) = 2^{10} 3^3 t^3 (1 - 4t^3).
\end{equation}
For every $t \in \mbq$, the specialization $\mbe_t$ is an elliptic curve over $\mbq$ whose discriminant $\gD_{\mbe_t} \in \mbq$ and $j$-invariant $j(\mbe_t) \in \mbq$ are given by evaluating \eqref{gDandj} at $t$.  We will show that, for any $t \in \mbq$, one has
\begin{equation} \label{specializedcontainment}
\mbq(\mbe_t[2]) \subseteq \mbq(\mbe_t[3]). 
\end{equation}
By \eqref{characterizationofXprimeof6} and \eqref{gDandj}, it then follows that
\[
\forall t \in \mbq, \quad 2^{10} 3^3 t^3 (1 - 4t^3) \in j(X'(6)(\mbq)).
\]
Since the natural $j$-map $j \colon X'(6) \longrightarrow \mbp^1$ and the map $t \mapsto 2^{10} 3^3 t^3 (1 - 4t^3)$ both have degree $6$, Theorem \ref{uniformizertheorem} will then follow.  To verify \eqref{specializedcontainment}, we will show that, for every $t \in \mbq$, one has
\begin{equation} \label{morespecializedcontainment}
\mbq(\mbe_t[2]) \subseteq \mbq(\zeta_3, \gD_{\mbe_t}^{1/3} ).
\end{equation}
Taken together with the classical fact that, for any elliptic curve $E$ over $\mbq$, one has $\mbq(\zeta_3, \gD_{E}^{1/3}) \subseteq \mbq(E[3])$, the containment \eqref{specializedcontainment} then follows.  Finally, \eqref{morespecializedcontainment} follows immediately from the factorization
\[
\left( x - e_1(t) \right)  \left(  x - e_2(t)  \right)  \left(  x - e_3(t)  \right)  = x^3 + 3t \left(1-4t^3 \right) x + \left(1 - 4t^3 \right) \left( \frac{1}{2} - 4t^3 \right),
\]
of the $2$-division polynomial $\ds x^3 + 3t \left(1-4t^3 \right) x + \left(1 - 4t^3 \right) \left( \frac{1}{2} - 4t^3 \right)$, where
\[
\begin{split}
e_1(t) &:= \frac{1}{6} \gD_{\mbe_t}^{1/3} + \frac{t}{18(1-4t^3)} \gD_{\mbe_t}^{2/3}, \\
e_2(t) &:= \frac{\zeta_3}{6} \gD_{\mbe_t}^{1/3} + \frac{\zeta_3^2 t}{18(1-4t^3)} \gD_{\mbe_t}^{2/3}, \; \text{ and} \\
e_3(t) &:= \frac{\zeta_3^2}{6} \gD_{\mbe_t}^{1/3} + \frac{\zeta_3 t}{18(1-4t^3)} \gD_{\mbe_t}^{2/3}.
\end{split}
\]
This finishes the proof of Theorem \ref{uniformizertheorem}.  \hfill $\Box$

We will now turn to Theorem \ref{completelisttheorem}, whose proof employs the following two group-theoretic lemmas.
\begin{lemma} \label{goursatlemma} (Goursat's Lemma)
Let $G_0$ and $G_1$ be groups and $G \subseteq G_0 \times G_1$ a subgroup satisfying
\[
\pi_i(G) = G_i \quad\quad (i \in \{ 0, 1 \} ),
\]
where $\pi_i$ denotes the canonical projection onto the $i$-th factor.  Then there exists a group $Q$ and surjective homomorphisms $\psi_0 \colon G_0 \rightarrow Q$, $\psi_1 \colon 
G_1 \rightarrow Q$ for which 
\begin{equation} \label{fiberedproductofgroups}
G = \{ (g_0,g_1) \in G_0 \times G_1 : \psi_0(g_0) = \psi_1(g_1) \}.
\end{equation}
\end{lemma}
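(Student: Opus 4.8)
The plan is to manufacture the common quotient $Q$ and the two surjections directly out of $G$, realizing $Q$ as a quotient of $G_0$ (equivalently of $G_1$). First I would introduce the two ``slices''
\[
N_0 := \{ g_0 \in G_0 : (g_0, e_1) \in G \}, \qquad N_1 := \{ g_1 \in G_1 : (e_0, g_1) \in G \},
\]
where $e_0, e_1$ denote the identities of $G_0, G_1$. Each $N_i$ is visibly a subgroup, and I would check that $N_0 \trianglelefteq G_0$ and $N_1 \trianglelefteq G_1$: given $g_0 \in N_0$ and an arbitrary $h_0 \in G_0$, the hypothesis $\pi_0(G) = G_0$ produces some $h_1$ with $(h_0, h_1) \in G$, and then $(h_0, h_1)(g_0, e_1)(h_0, h_1)^{-1} = (h_0 g_0 h_0^{-1}, e_1) \in G$ exhibits $h_0 g_0 h_0^{-1} \in N_0$; the argument for $N_1$ is symmetric.

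The heart of the proof is the construction of an isomorphism $\phi \colon G_0/N_0 \longrightarrow G_1/N_1$. For $g_0 \in G_0$, surjectivity of $\pi_0$ gives some $g_1$ with $(g_0,g_1) \in G$, and I would set $\phi(g_0 N_0) := g_1 N_1$. Every step of the verification that $\phi$ is well defined and bijective is forced by the same two moves: if $(g_0,g_1)$ and $(g_0,g_1')$ both lie in $G$ then $(e_0, g_1^{-1}g_1') \in G$, so $g_1^{-1}g_1' \in N_1$ (independence of the choice of $g_1$), while right multiplication of an element of $G$ by a slice element $(n_0,e_1)$ or $(e_0,n_1)$ gives independence of the coset representatives. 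Homomorphy is immediate from closure of $G$ under multiplication, surjectivity of $\phi$ follows from $\pi_1(G) = G_1$, and injectivity follows because $\phi(g_0 N_0) = N_1$ forces $(g_0,g_1) \in G$ with $g_1 \in N_1$, whence $(g_0,e_1) = (g_0,g_1)(e_0,g_1)^{-1} \in G$ and $g_0 \in N_0$.

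With the isomorphism in hand I would set $Q := G_0/N_0$, take $\psi_0 \colon G_0 \to Q$ to be the canonical projection, and define $\psi_1 \colon G_1 \to Q$ by $\psi_1(g_1) := \phi^{-1}(g_1 N_1)$; both are surjective homomorphisms. The final step is to verify the set equality \eqref{fiberedproductofgroups}. The inclusion $\subseteq$ is immediate from the definition of $\phi$: if $(g_0,g_1) \in G$ then $\phi(g_0 N_0) = g_1 N_1$, i.e.\ $\psi_0(g_0) = \psi_1(g_1)$. For the reverse inclusion, suppose $\psi_0(g_0) = \psi_1(g_1)$, equivalently $\phi(g_0 N_0) = g_1 N_1$; choosing $g_1'$ with $(g_0, g_1') \in G$ yields $g_1' N_1 = \phi(g_0 N_0) = g_1 N_1$, so $(g_1')^{-1} g_1 \in N_1$ and hence $(g_0,g_1) = (g_0,g_1')\,(e_0, (g_1')^{-1} g_1) \in G$. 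I expect the only real subtlety to lie in this last direction — threading the slice subgroups and the isomorphism $\phi$ together so that membership in $G$ is recovered \emph{exactly}, rather than merely up to a slice — while everything else is a direct consequence of the two surjectivity hypotheses $\pi_i(G) = G_i$.
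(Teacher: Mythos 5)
Your proof is correct and complete. Note that the paper does not actually prove this lemma --- it simply cites Ribet's Lemma (5.2.1) --- so there is no in-paper argument to compare against; what you have written is the standard self-contained proof of Goursat's Lemma (and is essentially the argument found in the cited reference). All the key verifications are present and sound: the slice subgroups $N_0, N_1$ are normal because conjugation by an arbitrary $(h_0,h_1) \in G$ fixes the identity in the other coordinate; the map $\phi \colon G_0/N_0 \to G_1/N_1$ is well defined precisely because any two second coordinates over the same $g_0$ differ by an element of $N_1$ and any two coset representatives of $g_0 N_0$ can be absorbed by right multiplication by $(n_0, e_1)$; and the reverse inclusion in \eqref{fiberedproductofgroups} is correctly recovered by writing $(g_0,g_1) = (g_0,g_1')(e_0,(g_1')^{-1}g_1)$ with both factors in $G$. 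Nothing is missing.
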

\begin{proof}
See \cite[Lemma (5.2.1)]{ribet}.
\end{proof}
Letting $\psi$ be an abbreviation for the ordered pair $(\psi_0, \psi_1)$, the group $G$ given by \eqref{fiberedproductofgroups} is called the \emph{fibered product of $G_0$ and $G_1$ over $\psi$}, and is commonly denoted by $G_0 \times_\psi G_1$.  Notice that, for a surjective group homomorphism $f \colon Q \rightarrow Q_1$, if $f \circ \psi$ denotes the ordered pair $(f \circ \psi_0, f \circ \psi_1)$ and $G_0 \times_{f \circ \psi} G_1$ denotes the corresponding fibered product, then one has
\begin{equation} \label{fiberedproductcontainment}
G_0 \times_\psi G_1 \subseteq G_0 \times_{f \circ \psi} G_1.
\end{equation}

\begin{lemma} \label{dissolvingundercommutatorslemma}
Let $G_0$ and $G_1$ be groups, let $\psi_0 \colon G_0 \rightarrow Q$ and $\psi_1 \colon G_1 \rightarrow Q$ be a pair of surjective homomorphisms onto a common quotient group $Q$, and let $H = G_0 \times_\psi G_1$ be the associated fibered product.  If $Q$ is cyclic, then one has the following equality of commutator subgroups:
\[
[ H, H ] = [G_0, G_0] \times [G_1, G_1].
\]
\end{lemma}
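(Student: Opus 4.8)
The plan is to prove the two inclusions separately. The inclusion $[H,H]\subseteq[G_0,G_0]\times[G_1,G_1]$ is formal and holds for any $Q$, so I would dispatch it first: the commutator of two elements $(a_0,a_1),(b_0,b_1)\in H$ is the coordinatewise commutator $([a_0,b_0],[a_1,b_1])$, which manifestly lies in $[G_0,G_0]\times[G_1,G_1]$, and such commutators generate $[H,H]$. All the content — and all the use of the hypothesis that $Q$ is cyclic — is in the reverse inclusion.

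For the reverse direction I would reduce to the two ``axis'' inclusions $[G_0,G_0]\times\{1\}\subseteq[H,H]$ and $\{1\}\times[G_1,G_1]\subseteq[H,H]$, since together these subgroups generate $[G_0,G_0]\times[G_1,G_1]$. By symmetry it suffices to treat $\{1\}\times[G_1,G_1]$. I would set $M:=\{g\in G_1 : (1,g)\in[H,H]\}$, which by the easy inclusion is a subgroup of $[G_1,G_1]$. It is moreover \emph{normal} in $G_1$: conjugating $(1,g)\in[H,H]$ by any $(h_0,h_1)\in H$ stays inside the normal subgroup $[H,H]$ and produces $(1,h_1 g h_1^{-1})$, and since $\pi_1(H)=G_1$ the element $h_1$ ranges over all of $G_1$. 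Thus the goal becomes $M=[G_1,G_1]$, equivalently that $G_1/M$ is abelian.

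This is where cyclicity enters. Writing $Q=\langle q\rangle$, I would fix a lift $s\in G_1$ of $q$ and any $t\in G_0$ with $\psi_0(t)=q$ (possible since $\psi_0$ is surjective), so that $(t,s)\in H$, and note that $(1,k)\in H$ for every $k\in\ker\psi_1$. Computing
\[
[(t,s),(1,k)]=(1,[s,k]),\qquad [(1,k),(1,k')]=(1,[k,k'])
\]
shows that $[s,k]$ and $[k,k']$ lie in $M$ for all $k,k'\in\ker\psi_1$. Cyclicity then lets me write every $g\in G_1$ as $g=s^n k$ with $n\in\mbz$ and $k\in\ker\psi_1$, because $\psi_1(g)=q^n$ for some $n$ forces $s^{-n}g\in\ker\psi_1$. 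Consequently, in $G_1/M$ the image of $\ker\psi_1$ is abelian and $\bar s$ commutes with it, so any two elements $\overline{s^a k}$ and $\overline{s^b l}$ commute; hence $G_1/M$ is abelian, giving $[G_1,G_1]\subseteq M$ and therefore $M=[G_1,G_1]$. Combining the two axis inclusions finishes the proof.

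The main obstacle is precisely the last step: generating \emph{all} of $[G_1,G_1]$ inside $[H,H]$ when the directly accessible commutators only involve $\ker\psi_1$ together with a single lift $s$ of the generator of $Q$. The identity $g=s^n k$ is exactly what reduces an arbitrary commutator of $G_1$ to these accessible cases, and it is available only because $Q$ is cyclic. For a non-cyclic $Q$ one would be forced to choose independent lifts $s_i$ of several generators, and there is no reason for $(1,[s_i,s_j])$ to lie in $[H,H]$; the argument genuinely breaks down, consistent with the hypothesis being necessary.
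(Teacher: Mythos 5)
Your proof is correct, and it is genuinely different from what the paper does: the paper offers no argument at all for this lemma, simply citing \cite[Lemma 1, p.~174]{langtrotter} with the remark that the hypothesis of that lemma is readily verified when $Q$ is cyclic. Your write-up is in effect a self-contained, elementary re-proof of the Lang--Trotter lemma in the cyclic case. The structure is sound throughout: the easy inclusion $[H,H]\subseteq[G_0,G_0]\times[G_1,G_1]$ via coordinatewise commutators; the reduction to the axis inclusion $\{1\}\times[G_1,G_1]\subseteq[H,H]$; the observation that $M=\{g\in G_1:(1,g)\in[H,H]\}$ is a normal subgroup of $G_1$ contained in $[G_1,G_1]$ (normality using $\pi_1(H)=G_1$, which indeed follows from surjectivity of $\psi_0$); and the key computation that $[s,k]$ and $[k,k']$ lie in $M$ for $k,k'\in\ker\psi_1$ and a lift $s$ of the generator, whence the decomposition $g=s^nk$ forces $G_1/M$ to be generated by pairwise commuting elements and hence abelian, giving $[G_1,G_1]\subseteq M$. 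Your closing remark correctly identifies where cyclicity is indispensable: for non-cyclic $Q$ one needs several independent lifts $s_i$, the commutators $(1,[s_i,s_j])$ need not lie in $[H,H]$, and the statement genuinely fails (e.g.\ $G_0=G_1=D_4$ with $Q\simeq\mbz/2\mbz\times\mbz/2\mbz$). What your approach buys is a proof the reader can check without consulting Lang--Trotter; what the citation buys the authors is brevity and a pointer to the more general hypothesis under which the conclusion holds.
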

\begin{proof}
See \cite[Lemma $1$, p. $174$]{langtrotter} (the hypothesis of this lemma is readily verified when $Q$ is cyclic).
\end{proof}
\noindent \emph{Proof of Theorem \ref{completelisttheorem}.}
As shown in \cite{jones2}, one has
\begin{equation*} 
\text{$E$ is not a Serre curve } \; \Longleftrightarrow \;
\begin{matrix}
\exists \text{ a prime $\ell \geq 5$ with } \rho_{E,\ell}(\gal(\ol{\mbq}/\mbq)) \subsetneq \GL_2(\mbz/\ell\mbz), \text{ or} \\
[ \rho_{E,36}(\gal(\ol{\mbq}/\mbq)), \rho_{E,36}(\gal(\ol{\mbq}/\mbq)) ] \subsetneq [ \GL_2(\mbz/36 \mbz), \GL_2(\mbz/36 \mbz) ].
\end{matrix}
\end{equation*}
For each divisor $d$ of $36$, let 
\begin{equation} \label{defofpid}
\pi_{36,d} \colon \GL_2(\mbz/36\mbz) \longrightarrow \GL_2(\mbz/d\mbz)
\end{equation}
denote the canonical projection.  One checks that, for $\ell \in \{ 2, 3 \}$, any proper subgroup $H \subsetneq \GL_2(\mbz/\ell\mbz)$ for which $\det(H) = (\mbz/\ell\mbz)^\times$ must satisfy $[ H, H ] \subsetneq [ \GL_2(\mbz/\ell\mbz), \GL_2(\mbz/\ell \mbz) ]$.  We then define 
\begin{equation} \label{defofG36}
\mc{G}_{36} := \left\{ H \subseteq \GL_2(\mbz/36\mbz) : \; 
\begin{matrix}
\forall d \in \{2, 3 \}, \, \pi_{36,d}(H) = \GL_2(\mbz/d\mbz), \, \det(H) = (\mbz/36\mbz)^\times, \\
 \text{and } \, [H,H] \subsetneq [\GL_2(\mbz/36\mbz), \GL_2(\mbz/36\mbz)] 
\end{matrix}
\right\},
\end{equation}
and note that
\begin{equation} \label{importedresult}
\text{$E$ is not a Serre curve } \; \Longleftrightarrow \;
\begin{matrix}
\exists \text{ a prime $\ell$ and } H \in \mc{G}_{\ell,\text{max}} \text{ for which } \rho_{E,\ell}(\gal(\ol{\mbq}/\mbq)) \subseteq H,  \\
\text{or } \; \exists H \in \mc{G}_{36} \text{ for which } \rho_{E,36}(\gal(\ol{\mbq}/\mbq)) \subseteq H.
\end{matrix}
\end{equation}
As in the prime level case, we need only consider \emph{maximal} subgroups $H \in \mc{G}_{36}$, and because of \eqref{modularinterpretation}, only up to conjugation by $\GL_2(\mbz/36\mbz)$.  Thus, we put
\[
\mc{G}_{36,\text{max}} := \{ H \in \mc{G}_{36} : \; \nexists H_1 \in \mc{G}_{36} \text{ with } H \subsetneq H_1 \subsetneq \GL_2(\mbz/36\mbz) \},
\]
we let $\ds \mc{R}_{36} \subseteq \mc{G}_{36,\text{max}}$ be a set of representatives of $\mc{G}_{36,\text{max}}$ modulo $\GL_2(\mbz/36\mbz)$-conjugation, and we set
\[
\mc{E}_{36} := \{ X_H : \; H \in \mc{R}_{36} \}.
\]
The equivalence \eqref{importedresult} now becomes (see \eqref{Ep})
\[
\text{$E$ is not a Serre curve } \; \Longleftrightarrow \;
\begin{matrix}
\exists \text{ a prime $\ell$ and } X_H \in \mc{E}_\ell \text{ for which } j(E) \in j(X_H(\mbq)),  \\
\text{or } \; \exists X_H \in \mc{E}_{36} \text{ for which } j(E) \in j(X_H(\mbq)).
\end{matrix}
\]
Thus, Theorem \ref{completelisttheorem} will follow from the next proposition.
\begin{proposition} \label{finalprop}
With the above notation, one may take
\[
\mc{R}_{36} = \{ \pi_{36,4}^{-1}(H_4'), \pi_{36,4}^{-1}(H_4''), \pi_{36,9}^{-1}(H_9'), \pi_{36,6}^{-1}(H_6') \},
\]
where $\pi_{36,d}$ is as in \eqref{defofpid} and the groups $H_4'$, $H_4''$, $H_9'$ and $H_6'$ are given by \eqref{X4andX9} and \eqref{defofH}.
\end{proposition}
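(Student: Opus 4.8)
The plan is to reduce the classification of $\mc{G}_{36,\mathrm{max}}$ to prime-power levels via the Chinese Remainder Theorem and Goursat's Lemma, and then to isolate the single genuinely new (level-$6$) phenomenon by a short determinant computation. Write $\GL_2(\mbz/36\mbz) \simeq \GL_2(\mbz/4\mbz) \times \GL_2(\mbz/9\mbz)$, under which the full commutator subgroup factors as $[\GL_2(\mbz/4\mbz),\GL_2(\mbz/4\mbz)] \times [\GL_2(\mbz/9\mbz),\GL_2(\mbz/9\mbz)]$. For $H \in \mc{G}_{36}$ set $H_4 := \pi_{36,4}(H)$ and $H_9 := \pi_{36,9}(H)$; the defining conditions in \eqref{defofG36} translate into the statements that the reduction modulo $2$ of $H_4$ is all of $\GL_2(\mbz/2\mbz)$, the reduction modulo $3$ of $H_9$ is all of $\GL_2(\mbz/3\mbz)$, and $\det(H)=(\mbz/36\mbz)^\times$. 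By Goursat's Lemma (\lemref{goursatlemma}) I may write $H = H_4 \times_\psi H_9$ for a common quotient $Q$ with gluing data $\psi = (\psi_0,\psi_1)$, so the whole problem becomes: which tuples $(H_4,H_9,Q,\psi)$ are compatible with the commutator condition $[H,H] \subsetneq [\GL_2(\mbz/36\mbz),\GL_2(\mbz/36\mbz)]$, and which are maximal.

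First I would dispose of the case that $Q$ is cyclic. Here \lemref{dissolvingundercommutatorslemma} gives $[H,H] = [H_4,H_4] \times [H_9,H_9]$, so the commutator condition forces $[H_4,H_4] \subsetneq [\GL_2(\mbz/4\mbz),\GL_2(\mbz/4\mbz)]$ or $[H_9,H_9] \subsetneq [\GL_2(\mbz/9\mbz),\GL_2(\mbz/9\mbz)]$; since a group's commutator relative to itself is never proper, this yields $H_4 \subsetneq \GL_2(\mbz/4\mbz)$ or $H_9 \subsetneq \GL_2(\mbz/9\mbz)$. A maximal such $H$ must then have trivial gluing and be the full preimage of a maximal proper constrained subgroup at a single prime power, i.e. $H = \pi_{36,4}^{-1}(H_4)$ with $H_9 = \GL_2(\mbz/9\mbz)$, or symmetrically with the roles reversed. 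The enumeration of the maximal subgroups of $\GL_2(\mbz/4\mbz)$ and of $\GL_2(\mbz/9\mbz)$ satisfying the surjectivity, determinant and commutator constraints is the content of \cite{dokchitser} and \cite{elkies}, and yields, up to conjugacy, the groups $H_4'$, $H_4''$ and $H_9'$ of \eqref{X4andX9}; this produces the first three members of $\mc{R}_{36}$.

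The heart of the matter is the case that $Q$ is non-cyclic, which I claim forces the single level-$6$ entanglement. The key computation is that $\ker\theta = \mc{N}$ lies in $\SL_2(\mbz/3\mbz)$: from the definition \eqref{defofN} every element of $\mc{N}$ has determinant $x^2+y^2 \equiv 1$ in the first block and $-(x^2+y^2)\equiv 1$ in the second, so $\det$ is trivial on $\ker\theta$ and therefore factors as $\det = \sgn\circ\theta$ through the unique nontrivial character of $\GL_2(\mbz/2\mbz)\simeq S_3$. Consequently the abelianization of $\GL_2(\mbz/3\mbz)$, and hence (the kernel of reduction being a $3$-group) that of $\GL_2(\mbz/9\mbz)$, has $2$-part only $\mbz/2\mbz$, so $\GL_2(\mbz/9\mbz)$ admits no quotient isomorphic to $\mbz/2\mbz \times \mbz/2\mbz$. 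By contrast the very definition of $H_4'$ shows that $\det$ and $\ve\circ(\text{reduction mod }2)$ are independent characters of $\GL_2(\mbz/4\mbz)$. Thus $\mbz/2\mbz\times\mbz/2\mbz$ is not a common quotient, and since every group of order at most $5$ other than $\mbz/2\mbz\times\mbz/2\mbz$ is cyclic, the smallest non-cyclic common quotient is $\GL_2(\mbz/2\mbz)\simeq S_3$, realized by taking $\psi_0$ to be reduction modulo $2$ on $\GL_2(\mbz/4\mbz)$ and $\psi_1$ to be $\theta$ composed with reduction modulo $3$ on $\GL_2(\mbz/9\mbz)$. The resulting fibered product is precisely $\pi_{36,6}^{-1}(H_6')$ of \eqref{defofH}, and its commutator is proper because the induced maps onto $[S_3,S_3]\neq 1$ are surjective, confining $[H,H]$ to a proper fibered product of commutators; this is the fourth member of $\mc{R}_{36}$.

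The main obstacle is to finish the non-cyclic case by establishing maximality and uniqueness: I must show that every non-cyclic common quotient $Q$ carries a surjection onto this $S_3$ compatible with $\psi$, so that by the containment \eqref{fiberedproductcontainment} the associated $H$ lies inside $\pi_{36,6}^{-1}(H_6')$ and is thus not maximal unless $Q = S_3$. This requires controlling all non-cyclic quotients of $\GL_2(\mbz/9\mbz)$ — which, the reduction kernel being a $3$-group and $|Q|$ dividing $|\GL_2(\mbz/4\mbz)|=96$, all factor through $\GL_2(\mbz/3\mbz)$, whose non-cyclic quotients $\PGL_2(\mbz/3\mbz)\simeq S_4$ and $S_3$ each sit over the single map $\theta$ (the preimage of the kernel of $S_4 \to S_3$ being the unique index-$6$ normal subgroup $\mc{N}$) — together with excluding any independent non-cyclic common quotient originating on the $\GL_2(\mbz/4\mbz)$ side. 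Once this is verified, and once one checks that the four listed preimages are pairwise non-conjugate and genuinely maximal in $\mc{G}_{36}$, the asserted description of $\mc{R}_{36}$ follows.
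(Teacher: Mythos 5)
Your overall strategy --- CRT plus Goursat, \lemref{dissolvingundercommutatorslemma} to dispose of cyclic $Q$, and the identification of $\GL_2(\mbz/2\mbz)\simeq S_3$ glued via $\theta$ as the unique level-$6$ phenomenon --- is essentially the paper's, and the pieces you do execute are correct ($\mc{N}\subseteq \SL_2(\mbz/3\mbz)$, $\det = \ve\circ\theta$, the exclusion of $\mbz/2\mbz\times\mbz/2\mbz$ as a common quotient). But there is a genuine gap: the step you yourself label ``the main obstacle'' is exactly the hard part of the proof, and the one-line justification you offer for it fails. You assert that, since $\ker\pi_{9,3}$ is a $3$-group and $|Q|$ divides $|\GL_2(\mbz/4\mbz)|=96$, every relevant quotient of $\GL_2(\mbz/9\mbz)$ factors through $\GL_2(\mbz/3\mbz)$. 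This does not follow: $3$ divides $96$, so the image of $\ker\pi_{9,3}$ in $Q$ may a priori be a nontrivial $3$-group (indeed $\det$ already furnishes a $\mbz/3\mbz$-quotient of $\GL_2(\mbz/9\mbz)$ that does \emph{not} factor through reduction mod $3$). Ruling out a surjection $\GL_2(\mbz/9\mbz)\to S_3$ that is nontrivial on $\ker\pi_{9,3}$ --- the paper's claim $Q_3\simeq Q$ in the case $Q\simeq\GL_2(\mbz/2\mbz)$ --- is precisely what the paper's character computation is for: one writes $N_9=\ker(a_1\chi_1+a_2\chi_2)$ on $\pi_{9,3}^{-1}(\SL_2(\mbz/3\mbz))$ and uses normality of $N_9$ in $\GL_2(\mbz/9\mbz)$ to force $a_1=0$, whence $N_9=\SL_2(\mbz/9\mbz)$, contradicting the non-abelianity of $Q$. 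Nothing in your sketch substitutes for this.

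Symmetrically, on the mod-$4$ side you still owe the argument that the gluing map factors through $\GL_2(\mbz/2\mbz)$ (i.e.\ $N_4\subseteq\ker\pi_{4,2}$) and an enumeration of the possible non-cyclic $Q$; the paper does this by realizing $\GL_2(\mbz/4\mbz)\simeq\GL_2(\mbz/2\mbz)\ltimes M_{2\times 2}(\mbz/2\mbz)$ and listing the $\GL_2(\mbz/2\mbz)$-invariant subspaces that can serve as $N_4$, which produces candidate quotients such as $S_4$, $\PGL_2(\mbz/4\mbz)$ and $\GL_2(\mbz/2\mbz)\times\{\pm1\}$ that must each be shown to map onto the $S_3$-gluing compatibly. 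Knowing that the \emph{smallest} non-cyclic common quotient is $S_3$ does not by itself yield the containment \eqref{Hcontainment} for all of these. A smaller organizational point: your dichotomy ``$Q$ cyclic vs.\ non-cyclic'' is not the same as ``some prime-power projection proper vs.\ both full'' --- a proper $H_4$ can occur with non-cyclic $Q$ --- so the cleaner reduction (the one the paper uses) is to first dispose of the case $\pi_{36,4}(H)\subsetneq\GL_2(\mbz/4\mbz)$ or $\pi_{36,9}(H)\subsetneq\GL_2(\mbz/9\mbz)$ via \cite{dokchitser} and \cite{elkies}, and only then apply Goursat to the two full groups.
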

\begin{proof}
Let $H \in \mc{G}_{36,\text{max}}$.  If $\pi_{36,4}(H) \neq \GL_2(\mbz/4\mbz)$, then \cite{dokchitser} shows that $\pi_{36,4}(H) \subseteq H_4'$ or $\pi_{36,4}(H) \subseteq H_4''$, up to conjugation in $\GL_2(\mbz/4\mbz)$.  If $\pi_{36,9}(H) \neq \GL_2(\mbz/9\mbz)$, then \cite{elkies} shows that, up to $\GL_2(\mbz/9\mbz)$-conjugation, one has $\pi_{36,9}(H) \subseteq H_9'$.  Thus, we may now assume that $\pi_{36,4}(H) = \GL_2(\mbz/4\mbz)$ and $\pi_{36,9}(H) = \GL_2(\mbz/9\mbz)$.  By Lemma \ref{goursatlemma}, this implies that there exists a group $Q$ and a pair of surjective homomorphisms
\[
\begin{split}
\psi_4 \colon \GL_2(\mbz/4\mbz) &\longrightarrow Q \\
\psi_9 \colon \GL_2(\mbz/9\mbz) &\longrightarrow Q
\end{split}
\]
for which
$
H = \GL_2(\mbz/4\mbz) \times_\psi \GL_2(\mbz/9\mbz).
$
We will now show that in this case, up to $\GL_2(\mbz/36\mbz)$-conjugation, we have
\begin{equation} \label{Hcontainment}
H \subseteq \{ (g_4, g_9) \in \GL_2(\mbz/4\mbz) \times \GL_2(\mbz/9\mbz) : \;  \theta ( g_9 \pmod{3} ) = g_4 \pmod{2} \},
\end{equation}
where $\theta \colon \GL_2(\mbz/3\mbz) \longrightarrow \GL_2(\mbz/2\mbz)$ is the map given in \eqref{defofpsi}, whose graph determines the level $6$ structure defining the modular curve $X'(6)$.  This will finish the proof of Proposition \ref{finalprop}.

Let us make the following definitions:
\[
\begin{matrix}
N_4 := \ker \psi_4 \subseteq \GL_2(\mbz/4\mbz), & N_9 := \ker \psi_9 \subseteq \GL_2(\mbz/9\mbz) \\
N_2 := \pi_{4,2}(N_4) \subseteq \GL_2(\mbz/2\mbz), & N_3 := \pi_{9,3}(N_9) \subseteq \GL_2(\mbz/3\mbz)\\
Q_2 := \GL_2(\mbz/2\mbz)/ N_2, & Q_3 := \GL_2(\mbz/3\mbz) / N_3,
\end{matrix}
\]
where $\pi_{4,2} \colon \GL_2(\mbz/4\mbz) \rightarrow \GL_2(\mbz/2\mbz)$ and $\pi_{9,3}  \colon \GL_2(\mbz/9\mbz) \rightarrow \GL_2(\mbz/3\mbz)$ denote the canonical projections.  We then have the following exact sequences:
\begin{equation} \label{exactsequences}
\begin{split}
1 \longrightarrow N_9 \longrightarrow &\GL_2(\mbz/9\mbz) \longrightarrow Q \longrightarrow 1 \\
1 \longrightarrow N_4 \longrightarrow &\GL_2(\mbz/4\mbz) \longrightarrow Q \longrightarrow 1 \\
1 \longrightarrow N_3 \longrightarrow &\GL_2(\mbz/3\mbz) \longrightarrow Q_3 \longrightarrow 1 \\
1 \longrightarrow N_2 \longrightarrow &\GL_2(\mbz/2\mbz) \longrightarrow Q_2 \longrightarrow 1,
\end{split}
\end{equation}
as well as
\begin{equation} \label{usefulexactsequences}
\begin{split}
1 \longrightarrow K_2 \longrightarrow &Q \longrightarrow Q_2 \longrightarrow 1 \\
1 \longrightarrow K_3 \longrightarrow &Q \longrightarrow Q_3 \longrightarrow 1,
\end{split}
\end{equation}
where for each $\ell \in \{ 2, 3 \}$, the kernel $\ds K_\ell \simeq \frac{\ker \pi_{\ell^2,\ell}}{N_{\ell^2} \cap \ker \pi_{\ell^2,\ell}} \subseteq \frac{\GL_2(\mbz/\ell^2\mbz)}{N_{\ell^2}} \simeq Q$ is evidently abelian (since $\ker \pi_{\ell^2, \ell}$ is), and has order dividing $\ell^4 = | \ker \pi_{\ell^2,\ell} |$.  We will proceed to prove that 
\begin{equation} \label{Qsurjection}
Q_2 \simeq \GL_2(\mbz/2\mbz) \quad \text{ and } \quad Q_3 \simeq Q,
\end{equation}
which is equivalent to
\[
N_4 \subseteq \ker \pi_{4,2} \quad \text{ and } \quad \ker \pi_{9,3} \subseteq N_9.
\]
Writing $\tilde{\psi}_4 \colon \GL_2(\mbz/4\mbz) \rightarrow Q \rightarrow Q_2 \simeq \GL_2(\mbz/2\mbz)$ and $\tilde{\psi}_9 \colon \GL_2(\mbz/9\mbz) \rightarrow Q \rightarrow Q_2 \simeq \GL_2(\mbz/2\mbz)$, we then see by \eqref{fiberedproductcontainment} that
\[
H = \GL_2(\mbz/4\mbz) \times_\psi \GL_2(\mbz/9\mbz) \subseteq  \GL_2(\mbz/4\mbz) \times_{\tilde{\psi}} \GL_2(\mbz/9\mbz).
\]
Furthermore, it follows from $Q \simeq Q_3$ that $\tilde{\psi}_9$ factors through the projection $\GL_2(\mbz/9\mbz) \rightarrow \GL_2(\mbz/3\mbz)$.  This, together with the uniqueness of $\mc{N}$ in \eqref{exactsequenceforN} and the fact that every automorphism of $\GL_2(\mbz/2\mbz)$ is inner, implies that \eqref{Hcontainment} holds, up to $\GL_2(\mbz/36\mbz)$-conjugation.  Thus, the proof of Proposition \ref{finalprop} is reduced to showing that \eqref{Qsurjection} holds.

We will first show that $Q_2 \simeq \GL_2(\mbz/2\mbz)$.  Suppose on the contrary that $Q_2 \subsetneq \GL_2(\mbz/2\mbz)$.  Looking at the first exact sequence in \eqref{usefulexactsequences}, we see that $Q$ must then be a $2$-group, and since the $K_3$ has order a power of $3$ (possibly $1$), we see that $Q \simeq Q_3$, and the third exact sequence in \eqref{exactsequences} becomes
\[
1 \longrightarrow N_3 \longrightarrow \GL_2(\mbz/3\mbz) \longrightarrow Q \longrightarrow 1.
\] 
The kernel $N_3$ must contain an element $\gs$ of order $3$, and by considering $\GL_2(\mbz/3\mbz)$-conjugates of $\gs$, we find that $| N_3 | \geq 8$.   Since $3$ also divides $| N_3 |$, we see that $ |N_3| \geq 12$, and so $Q$ must be abelian, having order at most $4$.  Furthermore, since $[ \GL_2(\mbz/3\mbz) , \GL_2(\mbz/3\mbz) ] = \SL_2(\mbz/3\mbz)$, we find that $Q$ has order at most $2$, and thus is cyclic.  Applying Lemma \ref{dissolvingundercommutatorslemma}, we find that $[H, H] = [ \GL_2(\mbz/36\mbz) , \GL_2(\mbz/36\mbz) ]$, contradicting \eqref{defofG36}.  Thus, we must have that $Q_2 \simeq \GL_2(\mbz/2\mbz)$.

We will now show that $Q_3 \simeq Q$.  To do this, we will first take a more detailed look at the structure of the group $\GL_2(\mbz/4\mbz)$.  Note the embedding of groups $\GL_2(\mbz/2\mbz) \hookrightarrow \GL_2(\mbz)$ given by
\[
\begin{matrix}
\begin{pmatrix}
1 & 0 \\
0 & 1
\end{pmatrix}
\mapsto 
\begin{pmatrix}
1 & 0 \\
0 & 1
\end{pmatrix}, &
\begin{pmatrix}
1 & 1 \\
1 & 0
\end{pmatrix}
\mapsto 
\begin{pmatrix}
-1 & -1 \\
1 & 0
\end{pmatrix}, &
\begin{pmatrix}
0 & 1 \\
1 & 1
\end{pmatrix}
\mapsto 
\begin{pmatrix}
0 & 1 \\
-1 & -1
\end{pmatrix},  \\
\begin{pmatrix}
0 & 1 \\
1 & 0
\end{pmatrix}
\mapsto 
\begin{pmatrix}
0 & 1 \\
1 & 0
\end{pmatrix}, &
\begin{pmatrix}
1 & 1 \\
0 & 1
\end{pmatrix}
\mapsto 
\begin{pmatrix}
-1 & -1 \\
0 & 1
\end{pmatrix}, &
\begin{pmatrix}
1 & 0 \\
1 & 1
\end{pmatrix}
\mapsto 
\begin{pmatrix}
1 & 0 \\
-1 & -1
\end{pmatrix}.
\end{matrix}
\]
This embedding, followed by reduction modulo $4$, splits the exact sequence 
\[
1 \rightarrow \ker \pi_{4,2} \rightarrow \GL_2(\mbz/4\mbz) \rightarrow \GL_2(\mbz/2\mbz) \rightarrow 1. 
\]
Also note the isomorphism $(\ker \pi_{4,2}, \cdot)  \rightarrow (M_{2\times 2}(\mbz/2\mbz), +)$ given by $I + 2A \mapsto A \pmod{2}$.  These two observations realize $\GL_2(\mbz/4\mbz)$ as a semi-direct product
\begin{equation} \label{semidirect}
\GL_2(\mbz/4\mbz) \simeq \GL_2(\mbz/2\mbz) \ltimes M_{2\times 2}(\mbz/2\mbz),
\end{equation}
where the right-hand factor is an additive group and the action of $\GL_2(\mbz/2\mbz)$ on $M_{2\times 2}(\mbz/2\mbz)$ is by conjugation.  Since $Q_2 \simeq \GL_2(\mbz/2\mbz)$, we see that, under \eqref{semidirect}, one has
\[
N_4 \subseteq M_{2\times 2} ( \mbz/2 \mbz),
\]
and since it is a normal subgroup of $\GL_2(\mbz/4\mbz)$, we see that $N_4$ must be a $\mbz/2\mbz$-subspace which is invariant under $\GL_2(\mbz/2\mbz)$-conjugation.  This implies that one of the equalities in the following table must hold.
\[
\begin{array}{|c|c|} \hline N_4 & Q \\ 
\hline\hline M_{2 \times 2}(\mbz/2\mbz) & \GL_2(\mbz/2\mbz) \\ 
\hline \left\{ A \in M_{2\times 2}(\mbz/2\mbz) : \; \tr A = 0 \right\} & \GL_2(\mbz/2\mbz) \times \{ \pm 1 \}  \\ 
\hline \left\{ 
\begin{pmatrix}
0 & 0 \\
0 & 0
\end{pmatrix}, 
\begin{pmatrix}
1 & 0 \\
0 & 1
\end{pmatrix}, 
\begin{pmatrix}
1 & 1 \\
1 & 0
\end{pmatrix}, 
\begin{pmatrix}
0 & 1 \\
1 & 1
\end{pmatrix}
\right\} & \GL_2(\mbz/2\mbz) \ltimes (\mbz/2\mbz)^2 \\
\hline  \left\{ 
\begin{pmatrix}
0 & 0 \\
0 & 0
\end{pmatrix}, 
\begin{pmatrix}
1 & 1 \\
0 & 1
\end{pmatrix}, 
\begin{pmatrix}
1 & 0 \\
1 & 1
\end{pmatrix}, 
\begin{pmatrix}
0 & 1 \\
1 & 0
\end{pmatrix}
\right\} & \GL_2(\mbz/2\mbz) \ltimes (\mbz/2\mbz)^2 \\
\hline  \left\{ 
\begin{pmatrix}
0 & 0 \\
0 & 0
\end{pmatrix}, 
\begin{pmatrix}
1 & 0 \\
0 & 1
\end{pmatrix} 
\right\} & \PGL_2(\mbz/4\mbz) \\
\hline
\end{array}
\]
(We have omitted from the table the case that $N_4$ is trivial, since then $Q \simeq \GL_2(\mbz/4\mbz)$, which has order $2^5 \cdot 3$ and thus cannot be a quotient of $\GL_2(\mbz/9\mbz)$.)
In the third row of the table, the action of $\GL_2(\mbz/2\mbz)$ on $(\mbz/2\mbz)^2$ defining the semi-direct product is the usual action by matrix multiplication on column vectors, while in the fourth row of the table, the action is defined via
\[
g \cdot
\begin{pmatrix}
x \\ y 
\end{pmatrix}
= 
\begin{cases}
\begin{pmatrix}
x \\ y 
\end{pmatrix} & \text{ if } g \in \left\{ 
\begin{pmatrix}
1 & 0 \\
0 & 1
\end{pmatrix}, 
\begin{pmatrix}
1 & 1 \\
1 & 0
\end{pmatrix}, 
\begin{pmatrix}
0 & 1 \\
1 & 1
\end{pmatrix}
\right\}, \\
\begin{pmatrix}
y \\ x 
\end{pmatrix} & \text{ if } g \in \left\{ 
\begin{pmatrix}
1 & 1 \\
0 & 1
\end{pmatrix}, 
\begin{pmatrix}
1 & 0 \\
1 & 1
\end{pmatrix}, 
\begin{pmatrix}
0 & 1 \\
1 & 0
\end{pmatrix}
\right\}.
\end{cases}
\]
Since $9$ does not divide $|Q|$, the degree of the projection $Q \twoheadrightarrow Q_3$ is either $1$ or $3$.  Inspecting the table above, we see that in all cases except $Q = \GL_2(\mbz/2\mbz)$, either $Q$ has no normal subgroup of order $3$, or for each normal subgroup $K_3 \unlhd Q$ of order $3$, $Q_3 \simeq Q/K_3$ has $\mbz/2\mbz \times \mbz/2\mbz$ as a quotient group.  Since $[ \GL_2(\mbz/3\mbz), \GL_2(\mbz/3\mbz) ] = \SL_2(\mbz/3\mbz)$, the group $\GL_2(\mbz/3\mbz)$ cannot have $\mbz/2\mbz \times \mbz/2\mbz$ as a quotient group, and so we must have $Q \simeq Q_3$ in these cases, as desired.

When $Q = \GL_2(\mbz/2\mbz)$, we must proceed differently.  Suppose that $Q = \GL_2(\mbz/2\mbz)$ and (for the sake of contradiction) that $Q \neq Q_3$, so that the projection $Q \twoheadrightarrow Q_3$ has degree $3$.  Then $Q_3 \simeq \mbz/2\mbz$, which implies that $N_3 = \SL_2(\mbz/3\mbz)$, so that
\[
N_9 \subseteq \pi_{9,3}^{-1} ( \SL_2(\mbz/3\mbz) ) \subseteq \GL_2(\mbz/9\mbz).
\]
Furthermore, the quotient group $\pi_{9,3}^{-1} ( \SL_2(\mbz/3\mbz) ) / N_9 \simeq \mbz/3\mbz$, and in particular is abelian.  A commutator calculation shows that
\[
[ \pi_{9,3}^{-1} ( \SL_2(\mbz/3\mbz) ) , \pi_{9,3}^{-1} ( \SL_2(\mbz/3\mbz) ) ] = \pi_{9,3}^{-1}( \mc{N} ) \cap \SL_2(\mbz/9\mbz),
\]
(see \eqref{defofN}) and that the corresponding quotient group satisfies 
\[
\pi_{9,3}^{-1} ( \SL_2(\mbz/3\mbz) ) / [ \pi_{9,3}^{-1} ( \SL_2(\mbz/3\mbz) ) , \pi_{9,3}^{-1} ( \SL_2(\mbz/3\mbz) ) ] \simeq \mbz/3\mbz \times \mbz/ 3 \mbz.  
\]
Furthermore, fixing a pair of isomorphisms 
\[
\begin{split}
&\eta_1 \colon \left( \left\{ 
\begin{pmatrix}
1 & 0 \\
0 & 1
\end{pmatrix}, 
\begin{pmatrix}
1 & 1 \\
1 & 0
\end{pmatrix}, 
\begin{pmatrix}
0 & 1 \\
1 & 1
\end{pmatrix}
\right\}, \cdot \right)
\longrightarrow \left( \mbz/3\mbz, + \right), \\
&\eta_2 \colon (1 + 3 \cdot \mbz/9\mbz, \cdot ) \longrightarrow (\mbz/3\mbz, +),
\end{split}
\]
and defining the characters
\[
\begin{split}
\chi_1 \colon \pi_{9,3}^{-1} ( \SL_2(\mbz/3\mbz) ) &\longrightarrow \mbz/3\mbz, \\
\chi_2 \colon \pi_{9,3}^{-1} ( \SL_2(\mbz/3\mbz) ) &\longrightarrow \mbz/3\mbz
\end{split}
\]
by $\chi_1 = \eta_1 \circ \theta \circ \pi_{9,3}$ and $\chi_2 = \eta_2 \circ \det$, we have that every homomorphism $\chi \colon \pi_{9,3}^{-1} ( \SL_2(\mbz/3\mbz) ) \rightarrow \mbz/3\mbz$ must satisfy
\[
\chi = a_1 \chi_1 + a_2 \chi_2,
\]
for appropriately chosen $a_1, a_2 \in \mbz/3\mbz$.  In particular, 
\begin{equation} \label{N9askernel}
N_9 = \ker ( a_1 \chi_1 + a_2 \chi_2 )
\end{equation}
for some choice of $a_1, a_2 \in \mbz/3\mbz$.  One checks that 
\[
\exists g \in \GL_2(\mbz/9\mbz), \; x \in \pi_{9,3}^{-1}(\SL_2(\mbz/3\mbz)) \; \text{ for which } \; \chi_1(g x g^{-1}) \neq \chi_1(x),
\]
whereas $\chi_2(g x g^{-1}) = \chi_2(x)$ for any such choice of $g$ and $x$.  Since $N_9$ is a normal subgroup of $\GL_2(\mbz/9\mbz)$, it follows that $a_1 = 0, a_2 \neq 0$ in \eqref{N9askernel}.  This implies that $N_9 = \SL_2(\mbz/9\mbz)$, which contradicts the fact that $\GL_2(\mbz/9\mbz) / N_9 \simeq Q \simeq \GL_2(\mbz/2\mbz)$ is non-abelian.  This contradiction shows that we must have $Q \simeq Q_3$, and this verifies \eqref{Qsurjection}, completing the proof of Proposition \ref{finalprop}.
\end{proof}

As already observed, the proof of Proposition \ref{finalprop} completes the proof of Theorem \ref{completelisttheorem}. \hfill $\Box$

\end{document}